\documentclass[oneside,11pt]{amsart}
\usepackage[pagebackref,breaklinks,unicode]{hyperref}
\hypersetup{pdftitle = Recognizing convergence actions}
\title[Recognizing convergence actions]{Recognizing convergence actions as proper isometric actions on hyperbolic spaces}

\usepackage[margin=2.99cm]{geometry}

\author{Lihuang Ding}
\address{Department of Mathematics and Statistics, Queen's University, Canada}
\email{lihuang.ding@queensu.ca}

\author{Abdul Zalloum}
\address{Department of Mathematics and Statistics, Queen's University, Canada}
\email{az32@queensu.ca}

\usepackage{amsmath,amssymb,amsthm, mathrsfs, mathtools}
\usepackage{enumitem} \setlist{nosep} 
\usepackage[utf8]{inputenc}
\usepackage[english]{babel}
\usepackage[pagebackref,breaklinks,unicode]{hyperref}
\usepackage{float}
\usepackage{comment}
\usepackage{csquotes}
\usepackage[font=small,justification=centering]{caption}
\usepackage{subcaption}
\usepackage[dvipsnames]{xcolor}
\usepackage{mathabx} 
\usepackage{xfrac} 
\usepackage[normalem]{ulem} 
\usepackage{dirtytalk}
\usepackage{cancel}
\usepackage{marginnote}

\newtheorem{theorem}{Theorem}[section]
\newtheorem{proposition}[theorem]{Proposition}
\newtheorem{question}[theorem]{Question}

\newtheorem{corollary}[theorem]{Corollary}
\newtheorem{lemma}[theorem]{Lemma}

\theoremstyle{definition}

\newtheorem{remark}[theorem]{Remark}

\newcounter{shcount}
 
\newcommand*{\ubsh}[1]{\theoremstyle{definition}\newtheorem*{subhead\theshcount}{#1}
    \begin{subhead\theshcount}} 
\newcommand*{\uesh}{\end{subhead\theshcount}\stepcounter{shcount}} 

\newcounter{claimcount}

\newenvironment{claim*}[1]{\par\vspace{2mm}\noindent
    \underline{Claim:}\hspace{2mm}#1}{}

\newcounter{enumlabelcount}
\makeatletter\newcommand\enumlabel[1][]{\item[#1]
    \refstepcounter{enumlabelcount}\def\@currentlabel{#1}}\makeatother

\renewcommand*{\backrefalt}[4]{\ifcase #1 (Not cited).\or (Cited p.~#2).\else (Cited pp.~#2).\fi} 

\makeatletter\def\subsection{\@startsection{subsection}{1}\z@{.7\linespacing\@plus\linespacing}
    {.5\linespacing}{\normalfont\scshape\centering}}\makeatother 
\makeatletter\def\part{\@startsection{part}{1}\z@{.7\linespacing\@plus\linespacing}
    {.5\linespacing}{\normalfont\large\scshape\centering}}\makeatother 

\newcommand{\eps}{\varepsilon}

\definecolor{harrycomment}{rgb}{0.6,0,0.4}

\begin{document}

\maketitle

\begin{abstract}
Relying merely on B.I. (a.k.a. Bowditch's intelligence), we show that whenever a group $G$ admits a convergence action on a compact perfect metrizable space $M$, there is a hyperbolic space $X$ along with a proper isometric action of $G$ on $X$ inducing a $G$-equivariant homeomorphism $f:M \rightarrow \partial X.$ This answers a question by Misha Kapovich.
\end{abstract}

\section{Introduction} 

An action of a group $G$ on a topological space $M$ by homeomorphisms is a \emph{convergence action} if the induced action on the space of distinct triples $\theta_3(M)$ is properly discontinuous. A convergence action is \emph{uniform} if the action on $\theta_3(M)$ is cocompact. In \cite[Thm.~0.1]{Bowditch1998}, Bowditch showed that a uniform convergence action on a perfect compactum (a compact metrizable space with no isolated points) yields a proper cobounded action on a hyperbolic space $X$ (hence $G$ is hyperbolic). In \cite[Question~7]{Kapovich2007Boundaries}, Misha Kapovich asks whether replacing cocompactness on $\theta_3(M)$ with minimality on $M$ still yields a proper action on a hyperbolic space whose boundary is $G$-equivariantly identified with $M$. More precisely:

 \begin{question}\cite[Question~7]{Kapovich2007Boundaries} \label{question:Misha} Let $G$ be a group admitting a convergence action on a perfect compactum $M$ such that every $G$-orbit is dense. Does there exist a hyperbolic space $X$ with Gromov boundary $M$ so that the $G$-action on $M$ extends to a uniformly quasi-isometric quasi-action on $X$?
    
\end{question}

The following example illustrates Question \ref{question:Misha}. Let $G$ be an infinite normal subgroup of a hyperbolic group $K$ acting geometrically on a hyperbolic space $X$ (e.g., $G=\pi_1(S)$ is the fiber subgroup of $K=\pi_1(N)$, where $N$ is a closed hyperbolic $3$-manifold fibering over the circle with fiber $S$, and $X=\mathbb{H}^3$). The induced $G$-action on $M=\partial X$ is a minimal convergence action. Loosely speaking, Question \ref{question:Misha} can be interpreted as asking whether this action alone, without knowledge of $K$, suffices to coarsely recover $X$ and the proper $G$-action on it. The following answers Question \ref{question:Misha} affirmatively in a strong sense:

\begin{theorem} \label{thm:main} Let $G$ be a group that admits a convergence action on a perfect compactum $M$. There is a geodesic hyperbolic space $X$ and a proper action of $G$ on $X$ by isometries inducing a $G$-equivariant homeomorphism $f: M \rightarrow \partial X$. 
\end{theorem}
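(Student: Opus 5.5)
The plan is to adapt Bowditch's proof of \cite[Thm.~0.1]{Bowditch1998}, replacing cocompactness on $\theta_3(M)$ by properness plus a locally finite choice of ``annuli''. Bowditch's construction runs as follows. An \emph{annulus} is an ordered pair $A=(A^-,A^+)$ of disjoint closed subsets of $M$ with nonempty interior complement. For three distinct points and a finite family $\mathcal{A}$ of annuli, one measures the ``crossratio'' $(a,b\mid c,d)$ as the number of annuli in $\mathcal{A}$ separating $\{a,b\}$ from $\{c,d\}$. When the annulus system is $G$-invariant and suitably locally finite, this yields a quasi-metric on $\theta_3(M)$ (or on a suitable graph built from triples) satisfying a four-point hyperbolicity inequality.

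\textbf{Step 1.} Fix a metric on $M$ and a $G$-invariant collection $\mathcal{A}$ of annuli, taken as a union of $G$-orbits of finitely many basic annuli chosen at many scales. The key property we need is \emph{local finiteness}: for any two disjoint closed sets $K,L$, only finitely many $A\in\mathcal{A}$ have $K\subset A^-$ and $L\subset A^+$. This should follow from the convergence property. If infinitely many translates $gA$ separated $K$ from $L$, pass to a convergence subsequence $g_n\to$ (attracting $p$, repelling $q$). Then $g_nA$ collapses one of $A^\pm$ to a point, contradicting separation of the fixed sets $K$ and $L$, which have definite size. We also need a \emph{fullness} property: every pair of distinct points is separated by an annulus. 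In the uniform setting Bowditch gets this from cocompactness. Here I would instead let $\mathcal{A}$ consist of \emph{all} $G$-translates of a countable base of annuli, e.g.\ those with $\epsilon$-fine boundary pieces. I expect making this collection still locally finite to be the main obstacle, since we can no longer pick finitely many orbits.

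\textbf{Step 2.} If a single locally finite $G$-invariant system cannot be full, I would use weights instead. Enumerate orbit representatives $A_1,A_2,\dots$ of annuli and give the orbit of $A_i$ weight $w_i>0$, with $w_i$ decaying fast enough that $\sum_i w_i\cdot\#\{g: gA_i \text{ separates } K,L\}<\infty$ for each pair $K,L$ with $d(K,L)>0$. Local finiteness of each individual orbit gives finiteness of each term. Then define the weighted crossratio
\[
(a,b\mid c,d)=\sum_{A\in\mathcal{A}} w(A)\,\mathbf{1}[A \text{ separates } \{a,b\},\{c,d\}],
\]
which is finite and positive for distinct points, and $G$-invariant.

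\textbf{Step 3.} Verify Bowditch's axioms for a ``hyperbolic crossratio''. The four-point condition follows formally from annuli being nested or crossing, but with real weights we need Bowditch's argument that crossing annuli contribute boundedly. This is where the weighted version may require a coarse counting lemma: among annuli separating given pairs, pairwise crossings have bounded total weight. Then apply the construction of \cite{Bowditch1998}: take $X$ to be a graph on $\theta_3(M)$, or on a $G$-invariant locally finite net of it, with edges between triples of bounded crossratio distance. Its hyperbolicity, together with the identification $\partial X\cong M$, follows as in Bowditch, using $M$ perfect so that every point is a limit of triples.

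\textbf{Step 4.} For properness of $G\curvearrowright X$, note that the vertex set is a subset of $\theta_3(M)$, on which $G$ acts properly discontinuously. A bounded set in $X$ corresponds to triples at bounded crossratio distance, and by local finiteness this is precompact in $\theta_3(M)$. Hence only finitely many $g$ move a given ball into itself. The map $f$ is $G$-equivariant by construction, since everything was built $G$-invariantly.
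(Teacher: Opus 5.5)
There is a genuine gap. Your outline defers the two properties that carry the proof, and the tools you propose work against them.

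\textbf{Separation.} Bowditch's hyperbolicity theorem needs the annulus system to be \emph{separated}: pairwise-crossing chains must have bounded length. You leave this to an unspecified ``coarse counting lemma'', but it fails for natural choices. Take $G=\{e\}$ and $M=S^1$ of circumference $1$, with a countable base containing $A(0,r,r+\eta)$ and $A(0.15,s,s+\eta)$ for $r,s\in[0.1,0.2]$. This gives two chains of length $\approx 0.1/\eta$ in which every annulus of one crosses every annulus of the other.

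\textbf{Divergence.} What $\partial X\cong M$ and properness actually require is $(xy|z)_{\mathcal{A}}=\infty$ for distinct $x,y,z$, so that $(xx_i|yz)\to\infty$ as $x_i\to x$. Your ``fullness'' $(x|y)>0$ is much weaker; Bowditch's Prop.~8.2 already gives it without cocompactness, and it does not suffice. As stated for arbitrary disjoint closed $K,L$, your local finiteness must \emph{fail} when $K$ has two points and $L$ is a point, since that failure is exactly the needed divergence. Your fast-decaying weights also push the wrong way. If the weights are summable, then for $G=\{e\}$ every crossratio is at most $\sum_i w_i$, so $X$ is bounded and $\partial X=\emptyset$. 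Step~4 has the same problem. Bounded sets of $X$ are precompact in $\theta_3(M)$ not because of local finiteness, but because triples with two colliding coordinates escape to infinity, which is again the divergence property.

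\textbf{The missing idea.} Choose the annuli so that crossings are impossible, yet nested chains still shrink to every point. Fix $x\in M$ and a scale $n$. By the convergence property, only finitely many translates $g$ of $A(x,\delta,\tfrac1n)=(\overline B(x,\delta),M\setminus B(x,\tfrac1n))$ have depth $\min\{\operatorname{diam}gA^-,\operatorname{diam}gA^+\}\ge\tfrac1{n+2}$. Shrink the inner radius to $\epsilon(x,n)<\delta$ so that, by continuity, those finitely many $g$ also shrink the inner ball. Then every translate of $A(x,\epsilon,\tfrac1n)$ has depth $<\tfrac1{n+2}$, while its width $d(A^-,A^+)$ exceeds $\tfrac1{n+1}$.

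At each scale take a finite subcover $F_n$ by the balls $B(x,\epsilon(x,n))$, and set $\mathcal{A}=\bigcup_n G\cdot F_n$ with all weights equal to $1$. Three properties follow.
\begin{itemize}
\item \emph{Discreteness.} The depth bound means only finitely many scales can separate $\{x,y\}$ from $\{z,w\}$, so discreteness reduces to finitely many orbits, as in your Step~1.
\item \emph{No crossings.} If $A\in F_m$ crossed $B\in G\cdot F_n$ with $n\ge m$, the width of $A$ would force $D(B)\ge\tfrac1{m+1}>\tfrac1{n+2}$, a contradiction. So $\mathcal{A}$ is $1$-separated.
\item \emph{Divergence.} Covers at finer and finer scales give arbitrarily long nested chains separating $x$ from any compact $K\not\ni x$.
\end{itemize}
Bowditch's theorems then apply verbatim. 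Properness follows from $(xy|z)=\infty$ together with the convergence property, and geodesicity from Lang's injective hull (your graph on $\theta_3(M)$ would also serve).
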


Theorem \ref{thm:main} complements Bowditch's characterization \cite[Thm.~0.1]{Bowditch1998} by showing that the cocompactness assumption on $\theta_3(M)$ can be dropped if one only seeks a proper action on a hyperbolic space with boundary $M$, rather than a proper, cobounded one. Furthermore, observe that the minimality assumption of the $G$-action on $M$ in Question \ref{question:Misha} is not needed in Theorem \ref{thm:main}. Indeed, the theorem applies even when $G=\{e\}$ and in this case, it yields:

\begin{corollary} \label{cor:identity} A perfect compactum is the Gromov boundary of a hyperbolic space. More precisely, if $M$ is a perfect compactum, then there is a hyperbolic metric $\rho$ on the space of distinct triples $\theta_3(M)$ with $\partial (\theta_3(M), \rho) \cong M.$
    
\end{corollary}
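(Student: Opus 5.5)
The statement is Corollary~\ref{cor:identity}: we must show every perfect compactum $M$ is the Gromov boundary of a hyperbolic space, realized as $\theta_3(M)$ with some hyperbolic metric $\rho$. The plan is to deduce it from Theorem~\ref{thm:main} with $G=\{e\}$, but to read off from the construction behind that theorem that $X$ can be taken to be $\theta_3(M)$ itself. The trivial group acts on $M$ by a convergence action, because every action of a finite group on $\theta_3(M)$ is properly discontinuous. Theorem~\ref{thm:main} therefore gives a geodesic hyperbolic space $X$ and a homeomorphism $f\colon M\to\partial X$. This already proves the first sentence of the corollary. Properness of the action is vacuous here.

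For the refined statement, I would follow Bowditch's approach in \cite{Bowditch1998}, which the abstract says we rely on. Fix a metric $d$ on $M$. For a triple $\tau=(x,y,z)\in\theta_3(M)$, use the ``annulus'' or crossratio structure: for triples $\tau,\tau'$, let $\rho(\tau,\tau')$ be the supremum of the number of nested annuli separating points of $\tau$ from points of $\tau'$. One could instead use a logarithmic crossratio expression $\max|\log [a,b,c,d]|$ over points drawn from $\tau\cup\tau'$, and then take a path metric. The steps are:
\begin{enumerate}
\item Define the annulus system $\mathcal A$ from $d$, e.g.\ pairs of closed sets $(A^-,A^+)$ at definite $d$-distance, rescaled.
\item Check Bowditch's axioms for $\mathcal A$. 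Any system satisfying them makes $\theta_3(M)$ with the induced (quasi-)metric hyperbolic, via the four-point condition. $G$-invariance is automatic when $G=\{e\}$.
\item Replace the quasi-metric by a genuine metric $\rho$, for instance by taking the induced path metric on a graph with vertex set a net in $\theta_3(M)$, extended to all of $\theta_3(M)$. This keeps $\rho$ hyperbolic.
\item Identify $\partial(\theta_3(M),\rho)$ with $M$. A sequence of triples $\tau_n$ converges to $p\in M$ at infinity exactly when at least two of its points converge to $p$ in $d$. This gives a map $M\to\partial X$. Bijectivity and continuity then follow, with perfectness used to guarantee the sequences exist.
\end{enumerate}
Compactness of $M$ then makes the continuous bijection a homeomorphism, because $\partial X$ is Hausdorff.

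I expect step 2, establishing hyperbolicity without a cocompact group action, to be the main obstacle. In Bowditch's uniform setting, cocompactness supplies finiteness conditions on the annuli (only finitely many annuli up to the group action), and here they must instead come from a careful choice of the annuli relative to $d$. My first attempt would be to use dyadic-scale annuli, $d(A^-,A^+)\ge 2^{-k}$ at scale $k$. I would verify the key ``nesting implies separation'' axiom directly from the triangle inequality in $d$.
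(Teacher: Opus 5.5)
Your first paragraph is correct and is how the paper gets the first sentence: the trivial group acts as a convergence group, so Theorem~\ref{thm:main} applies.

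\textbf{The gap.} The refined statement does not follow from the statement of Theorem~\ref{thm:main}. The paper reads it off the construction, before Lang's injective hull is taken:
\begin{itemize}
\item Theorem~\ref{thm:good_annulus_system} with $G=\{e\}$ gives a discrete, $1$-separated annulus system $\mathcal{A}$ with $(x|K)_{\mathcal{A}}=\infty$, hence $(xy|z)_{\mathcal{A}}=\infty$.
\item Theorems~\ref{thm:scattered_1} and~\ref{thm:scattered_2} then say that $\rho=\rho_{\mathcal{A}}$ is already a hyperbolic metric on $\theta_3(M)$ with $\partial(\theta_3(M),\rho_{\mathcal{A}})\cong M$.
\end{itemize}
You never produce such an $\mathcal{A}$, and that construction is the entire content.

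You also misplace the difficulty. Bowditch's hyperbolicity result (Theorem~\ref{thm:scattered_1}) needs only separatedness and uses no group action. Moreover, $\rho_{\mathcal{A}}$ is already a genuine metric, so neither a four-point check nor your step~3 is where the work lies. What is needed is a system that is sparse (discrete and $L$-separated) and also rich: $(xy|z)_{\mathcal{A}}=\infty$ for all distinct $x,y,z$. You never state this last condition, yet your step~4 depends on it. A finite system, for example, is discrete and separated but gives a bounded space with empty boundary.

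\textbf{Why your dyadic candidate fails.} Every annulus has positive width, so ``$d(A^-,A^+)\ge 2^{-k}$ at scale $k$'' taken over all $k$ is just the system of all annuli. That system is neither discrete nor separated. Fix distinct $x,y,z,w$. The thin annuli $(\{p: d(p,\{x,y\})\le r\},\{p: d(p,\{x,y\})\ge s\})$ with $r<s$ small exist by perfectness. They form arbitrarily long chains separating $\{x,y\}$ from $\{z,w\}$. Each of them crosses every thin annulus around $\{x,z\}$, because the four intersections contain $x$, $y$, $z$ and $w$ respectively.

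\textbf{The missing idea.} Pair a width lower bound at each scale with a depth upper bound at every finer scale.
\begin{itemize}
\item Lemma~\ref{lemma:good_annulus} gives annuli $A(x,\epsilon,\frac{1}{n})$ with $W>\frac{1}{n+1}$ and $D<\frac{1}{n+2}$, and a finite subcover keeps only finitely many per scale.
\item Suppose $A$ at scale $m$ and $B$ at scale $n\ge m$ crossed. Pick a point in each of the four intersections. Then $B^-$ and $B^+$ each contain two points at distance at least $W(A)>\frac{1}{m+1}$, contradicting $D(B)<\frac{1}{n+2}$. So no two annuli cross.
\item The depth bound also limits which scales can separate $\{x,y\}$ from $\{z,w\}$, which gives discreteness.
\item Balls shrinking through successive scales give $(x|K)_{\mathcal{A}}=\infty$.
\end{itemize}
Requiring $W(A)\ge 2^{-k}>D(A)$ at scale $k$, with finitely many annuli per scale, would repair your scheme in the same way.
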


Bonk and Schramm \cite{BonkSchramm2000} showed that every complete bounded metric space $M$ is the Gromov boundary of a hyperbolic space. More precisely, they equip $\operatorname{Cone}(M):=M\times(0,D]$, where $D=\operatorname{diam}(M)$, with the hyperbolic metric $\rho((x,t),(y,s))=2\log\left(\frac{d(x,y)+\max\{t,s\}}{\sqrt{ts}}\right),$ whose Gromov boundary is $M$. Theorem \ref{thm:main} gives an equivariant strengthening of their boundary realization result. Indeed, for any perfect compactum $M$ and any finite subgroup $G<\operatorname{Homeo}(M)$, or any infinite cyclic subgroup $G=\langle g\rangle<\operatorname{Homeo}(M)$ acting with north--south dynamics, there exists a geodesic hyperbolic space $X$ with $\partial X\cong M$ equivariantly and $G<\operatorname{Isom}(X)$ acting properly.

We remark that recent work of Chaodong Yang and Wenyuan Yang reiterates Question \ref{question:Misha} in \cite[Remark~1.5]{YangYang2026}, whose results, combined with Theorem \ref{thm:main}, yield a positive answer to \cite[Q.~1.4]{YangYang2026}: every discrete group admitting a convergence action on a compact metrizable space with countably many non-conical limit points admits a geometrically finite action on a hyperbolic space. More recently, Chaodong Yang independently obtained a positive answer to this question using different methods \cite{Yang2026}.

The following is a natural question (we thank Kathryn Mann for raising it).

\begin{question}[Cannon--Thurston maps]\label{question:cannon_Thurston} Let $G$ be a hyperbolic group admitting a minimal convergence action on a perfect compactum $M$. By Theorem \ref{thm:main}, $G$ acts properly on a hyperbolic space $X$ with $\partial X\cong M$. Under what conditions on $G \curvearrowright M $ $($or on $G \curvearrowright X)$ does an orbit map $G\to X$ extend continuously and equivariantly to a map $\partial G\to\partial X$?
    
\end{question}

Without additional assumptions, the answer to Question \ref{question:cannon_Thurston} is negative. Indeed, Baker and Riley \cite{BakerRiley2013} constructed a free subgroup $G=\mathbb{F}_3$ of a hyperbolic group $K$ whose inclusion $G\hookrightarrow K$ admits no continuous extension to the Gromov boundaries. Taking $M$ to be the limit set of $G$ in $\partial K$ yields a minimal convergence action of $G$ on a perfect compactum $M$ admitting no continuous extension $\partial G\to M$.


\section{A comment on the proof} Much of the groundwork for Theorem \ref{thm:main} was laid by Bowditch \cite{Bowditch1998}, some of whose ideas were independently rediscovered in \cite{petytzalloum:constructing}. We explain how to build on his arguments to obtain our result. Let $M$ be a perfect compactum and let $\theta_3(M) \subset M^3$ denote the subspace of distinct triples (i.e., no two coordinates are equal). An \emph{annulus} in $M$ is a pair $A=(A^-,A^+)$ where $A^-,A^+ \subset M$ are non-empty, closed, disjoint and satisfy $M \setminus (A^- \cup A^+) \neq\emptyset$. Let $A=(A^-,A^+), B=(B^-,B^+)$ be two annuli; we say $A,B$ \emph{cross} if $A^+ \cap B^+, A^+ \cap B^-, A^- \cap B^+, A^- \cap B^-$ are all non-empty. We say $A$ is \emph{nested} in $B$ (and write $A<B$) if $M \setminus B^- \subset A^+$ and a collection of annuli $c=\{A_1, \cdots, A_n\}$ is said to form a \emph{chain} if, possibly after reordering, we have $A_1<A_2< \cdots < A_n$. Two chains $c_1,c_2$ \emph{cross} if every annulus of $c_1$ crosses every annulus of $c_2.$ An annulus $A=(A^-,A^+)$ is said to \emph{separate} $Y,Z \subset M$ if $Y \subset A^-, Z \subset A^+$ or vice versa. A chain $c$ is said to separate $Y,Z\subset M$ if each $A \in c$ does. An annulus $A$ is said to separate two distinct triples $(x_1,x_2,x_3), (y_1,y_2,y_3) \in \theta_3(M)$ if $|\{x_1, x_2, x_3\} \cap A^-| \geq 2$ and $|\{y_1, y_2, y_3\} \cap A^+| \geq 2$ or vice versa. A chain $c$ is said to separate $(x_1,x_2,x_3), (y_1,y_2,y_3) \in \theta_3(M)$ if each annulus $A \in c$ does.  An \emph{annulus system} is simply a collection of annuli. Given an annulus system $\mathcal{A}$, define
\begin{center}
    $(Y|Z)_\mathcal{A}:= \text{sup}\{|c|: c \subset \mathcal{A} \text{ is a chain separating }Y,Z\} \in \mathbb{N} \cup \{0\} \cup \{\infty\}.$
\end{center}

To simplify notation, we use $(xy|zw)_{\mathcal{A}}$ to denote $(\{x,y\}|\{z,w\})_{\mathcal{A}}$. An annulus system $\mathcal{A}$ is \emph{discrete} if for any $x \neq y$ and any $z \neq w$, we have $(xy|zw)_{\mathcal{A}}<\infty$. A discrete annulus system is said to be \emph{$L$-separated} if whenever $c_1,c_2 \subset \mathcal{A}$ are chains that cross, we must have $\text{min}\{|c_1|, |c_2|\} <L.$ A \emph{separated} annulus system is an $L$-separated annulus system for some $L.$ In \cite[Prop. 6.5]{Bowditch1998}, Bowditch shows that each separated annulus system  $\mathcal{A}$ gives rise to a hyperbolic metric $\rho_\mathcal{A}$ on $\theta_3(M)$ (a fact that was rediscovered and generalized in \cite{petytzalloum:constructing}, see \cite[Thm K]{petytzalloum:constructing}) by defining 
$\rho_{\mathcal{A}}(x,y):=1+\sup\{|c|: c  \subset \mathcal{A} \text{ is a chain separating }x,y\}$ whenever $x,y \in \theta_3(M)$ are distinct and declaring $\rho_{\mathcal{A}}(x,x)=0$. Namely:

\begin{theorem}[\cite{Bowditch1998}]\label{thm:scattered_1} If $\mathcal{A}$ is a separated annulus system on a perfect compactum $M$, then the metric space $X_{\mathcal{A}}:=(\theta_3(M), \rho_\mathcal{A})$ is a roughly geodesic hyperbolic space. If $G<\text{Homeo}(M)$ preserves $\mathcal{A},$ then $G$ acts on $X_\mathcal{A}$ by isometries. 
\end{theorem}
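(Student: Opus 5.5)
The plan is to follow Bowditch's strategy: first show that $\rho_{\mathcal{A}}$ is a metric, then establish hyperbolicity via a four-point condition for chain-separation counts, and finally verify rough geodesicity and equivariance.

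\textbf{Metric axioms.} Discreteness of $\mathcal{A}$ gives finiteness of $\rho_{\mathcal{A}}$. Indeed, if a chain separates two triples $x,y$, then each of its annuli separates some pair of points of $x$ from some pair of points of $y$. Pigeonholing over the finitely many choices of pairs, a subchain of proportional length separates a fixed pair $\{x_i,x_j\}$ from a fixed pair $\{y_k,y_l\}$. That subchain is bounded by $(x_ix_j|y_ky_l)_{\mathcal{A}}<\infty$.

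Symmetry is built into the definition. For the triangle inequality, take a chain $c$ separating $x$ from $z$ and an intermediate triple $y$. Each annulus $A\in c$ has at least two points of $y$ in $A^-$ or at least two in $A^+$. Since $c$ is linearly ordered by nesting, the annuli with $y$ on the $x$-side form an initial or final segment, up to at most one exceptional annulus where $y$ is split. This yields $|c|\le \rho(x,y)+\rho(y,z)$, and the additive constant $1$ absorbs the exceptional annulus.

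\textbf{Hyperbolicity.} This is the main step. I would prove a Gromov four-point (or thin-triangle) condition. The key lemma is that if two long chains are each ``transverse'' to a configuration of four triples, then they must cross. The $L$-separation hypothesis then caps the shorter chain at $L$.

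Concretely, for triples $x,y,z,w$, I would compare $\rho(x,y)+\rho(z,w)$ with the other two pairings. I would show that a long chain separating $x\cup y$-type data from $z\cup w$, together with chains realizing the other distances, forces a pair of crossing chains of length comparable to the defect in the four-point inequality. The defect is therefore bounded in terms of $L$.

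The combinatorics of triples, as opposed to points, is the delicate part. I would first try to reduce to the point-version quantities $(ab|cd)_{\mathcal{A}}$, using the pigeonhole reduction above with a multiplicative loss that depends only on the number of pair choices. Then I would run Bowditch's crossing argument on those quantities.

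\textbf{Rough geodesicity.} Given $x,y$ and a maximal separating chain $A_1<\dots<A_n$, I would pick triples $t_i$ with two points in the region between $A_i$ and $A_{i+1}$, using perfectness of $M$ to find enough distinct points. Consecutive $t_i$ should then be at bounded distance, and $\rho(x,t_i)\approx i$ up to a constant depending on $L$. Crossing chains would again be the obstruction, controlled by separation.

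\textbf{Equivariance.} This part is immediate. Homeomorphisms preserving $\mathcal{A}$ preserve nesting, chains and the separation of triples, so they preserve $\rho_{\mathcal{A}}$.

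I expect the hyperbolicity step, specifically producing crossing chains from a failure of the four-point condition, to be the main obstacle.
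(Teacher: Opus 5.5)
The paper does not prove this statement; it quotes it from \cite[Prop.~6.5]{Bowditch1998} (and \cite[Thm~K]{petytzalloum:constructing}). So your outline has to carry the whole argument.

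\textbf{What is fine.} Equivariance is immediate. The triangle inequality also works, but your first sentence there is false: a triple can have points in the middle $M\setminus(A^-\cup A^+)$. The correct justification for ``at most one exceptional annulus'' is that the middle of $A_i$ lies in $A_{i+1}^-$. Hence if $A_i$ splits $y$, then $y$ already has two points in $A_j^-$ for every $j>i$.

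\textbf{No multiplicative loss.} Suppose $x$ sits on the $-$ side of a chain $A_1<\cdots<A_n$ separating $x$ from $y$. The two points of $x$ in $A_1^-$ lie in every $A_i^-$, and the two points of $y$ in $A_n^+$ lie in every $A_i^+$. So $\rho_{\mathcal A}(x,y)=1+\max(x_ax_b|y_cy_d)_{\mathcal A}$ exactly. This matters: a reduction controlling $\rho_{\mathcal A}$ only up to a multiplicative constant could never give an additive four-point inequality. Hyperbolicity is not a quasi-isometry invariant for spaces not yet known to be roughly geodesic.

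\textbf{The gap: hyperbolicity.} This is the whole content of Bowditch's proposition, and your ``key lemma'' is not the right mechanism: two long chains attached to four triples need not cross at all. Splitting maximal chains $c$ (for $\rho_{\mathcal A}(x,y)$) and $c'$ (for $\rho_{\mathcal A}(z,w)$) by where the other triples lie only yields pieces. To prove $\rho_{\mathcal A}(x,y)+\rho_{\mathcal A}(z,w)\le\rho_{\mathcal A}(x,w)+\rho_{\mathcal A}(y,z)+O(L)$ (or the other pairing), you must \emph{splice} an initial piece of $c$ with a final piece of $c'$ into one chain.

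The missing splicing lemma is as follows. Let $A_1<\cdots<A_p$ and $B_1<\cdots<B_q$ be chains with $A_i^-\cap B_j^-$, $A_i^+\cap B_j^-$, $A_i^+\cap B_j^+$ always nonempty (witnessed by the triples). Then:
\begin{itemize}
\item $A_i$ crosses $B_j$ iff $A_i^-\cap B_j^+\neq\emptyset$, and this condition is monotone in $(i,j)$.
\item So a crossing pair $(i_0,j_0)$ yields crossing chains $\{A_i\}_{i\ge i_0}$ and $\{B_j\}_{j\le j_0}$.
\item When $A_{i+1},B_{j-1}$ do not cross, $M\setminus B_j^-\subset B_{j-1}^+\subset M\setminus A_{i+1}^-\subset A_i^+$, i.e.\ $A_i<B_j$.
\item Combining these with $L$-separation, $A_1<\cdots<A_{p-L}<B_{L+1}<\cdots<B_q$ is a chain.
\end{itemize}
Crossing is thus only the obstruction to splicing. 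This is what converts $L$-separation into an additive four-point bound.

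\textbf{Rough geodesicity.} This step needs the same tool and also a better choice of points. A triple with two points between $A_i$ and $A_{i+1}$ can be arbitrarily far from $x$. For example, in the system of Theorem~\ref{thm:good_annulus_system}, two nearby points are separated from any fixed finite set by arbitrarily long chains of small balls.

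One choice that works is $t_i=(x_a,y_b,m)$, with $x_a\in x\cap A_1^-$, $y_b\in y\cap A_n^+$, and a single point $m$ in the middle of $A_i$. For $L<i<n-L$, consider a chain separating $x$ from $t_i$. At most $L$ of its annuli fail to put $y$ on the side of $t_i$; otherwise they would cross all of $A_1,\dots,A_{i-1}$, all of $A_{i+1},\dots,A_n$, or all of $c$. The remaining annuli splice with $A_{i+L+1},\dots,A_n$, so maximality of $c$ gives $\rho_{\mathcal A}(x,t_i)\le i+O(L)$.
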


For the rest of the article, we will use $X_\mathcal{A}$ to denote $(\theta_3(M), \rho_\mathcal{A}).$  Given a separated annulus system $\mathcal{A}$ on a perfect compactum $M$, Bowditch calls $\mathcal{A}$ \emph{compatible} with the topology on $M$ if for any distinct $x,y,z$ and a sequence $x_i \rightarrow x$, we have $(xx_i|yz)_\mathcal{A} \rightarrow \infty.$ If for any distinct $x,y,z \in M$, we have $(xy|z)_{\mathcal{A}}=\infty$, then $\mathcal{A}$ is compatible with the topology on $M$ as shown in \cite[Lem. 6.6]{Bowditch1998}. Combining this with \cite[Lem. 4.6, Prop. 4.7, Prop. 4.8]{Bowditch1998} gives:

\begin{theorem}[\cite{Bowditch1998}]\label{thm:scattered_2} Let $M$ be a perfect compactum and let $\mathcal{A}$ be a separated annulus system with $(xy|z)_{\mathcal{A}}=\infty$ for all distinct $x,y,z \in M$. There is a natural homeomorphism $f:M \rightarrow \partial X_\mathcal{A}$ and if $G<\text{Homeo}(M)$ preserves $\mathcal{A}$, then $f$ is $G$-equivariant.
    
\end{theorem}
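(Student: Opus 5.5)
The plan is to define $f$ directly via sequences of triples, and to check the required properties using the chain counts $(\cdot\,|\,\cdot)_{\mathcal{A}}$. First, by \cite[Lem.~6.6]{Bowditch1998}, the hypothesis $(xy|z)_{\mathcal{A}}=\infty$ for all distinct $x,y,z$ gives compatibility: whenever $x_i\to x$ and $y,z$ are distinct from $x$, we have $(xx_i|yz)_{\mathcal{A}}\to\infty$.

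Fix a basepoint $p=(a,b,c)\in\theta_3(M)$ and work with Gromov products $\langle s|t\rangle_p$ in the roughly geodesic hyperbolic space $X_{\mathcal{A}}$ (Theorem~\ref{thm:scattered_1}). The central estimate to prove is that $\langle s|t\rangle_p$ agrees, up to an additive error depending only on the separation constant $L$, with the maximal length of a chain in $\mathcal{A}$ separating two points of $p$ from two points of $s$ and two points of $t$ simultaneously. This is the analogue of \cite[Lem.~4.6]{Bowditch1998}, and its proof uses $L$-separation: two long chains, one separating $p$ from $s$ and one separating $p$ from $t$, must share a long common initial segment rather than cross.

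With this estimate, call a sequence of triples $t_i$ \emph{$x$-converging} if at least two of its coordinates tend to $x$. Compatibility then gives $\langle t_i|t_j\rangle_p\to\infty$, so $(t_i)$ is a Gromov sequence. Any two $x$-converging sequences can be interleaved into a single $x$-converging sequence, so they are equivalent. Define $f(x)$ to be this common boundary point; this is the construction behind \cite[Prop.~4.7]{Bowditch1998}.

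The remaining properties would be checked as follows.
\begin{itemize}
\item \emph{Injectivity.} If $x\neq y$, take $x$-converging $s_i$ and $y$-converging $t_i$. Any chain separating $p$ from both $s_i$ and $t_i$ eventually separates a neighbourhood of $\{x,y\}$ from two points of $p$. Discreteness bounds its length by roughly $(xy|zw)_{\mathcal{A}}<\infty$ for suitable $z\neq w$ in $p$, so $\langle s_i|t_i\rangle_p$ stays bounded and $f(x)\neq f(y)$.
\item \emph{Surjectivity.} Given a Gromov sequence $t_i=(x_i,y_i,z_i)$, pass to a subsequence so that the coordinates converge in $M$ by compactness.
  \begin{itemize}
  \item If the limit is a distinct triple, then $\rho_{\mathcal{A}}(t_i,p)$ stays bounded. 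This uses discreteness again: only boundedly many annuli can separate triples near a fixed distinct triple from $p$. That contradicts $\langle t_i|t_i\rangle_p\to\infty$.
  \item Otherwise two coordinates converge to a common point $x$, and the sequence represents $f(x)$.
  \end{itemize}
\item \emph{Continuity.} If $x_n\to x$, choose $x_n$-converging sequences diagonally. The estimate together with compatibility, $(xx_n|yz)_{\mathcal{A}}\to\infty$, gives $\langle f(x_n)|f(x)\rangle_p\to\infty$, which is convergence in $\partial X_{\mathcal{A}}$.
\end{itemize}
Since $M$ is compact and $\partial X_{\mathcal{A}}$ is Hausdorff, the continuous bijection $f$ is a homeomorphism.

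Equivariance is formal. If $g\in G$ preserves $\mathcal{A}$, it acts isometrically on $X_{\mathcal{A}}$ by Theorem~\ref{thm:scattered_1}, and it carries $x$-converging sequences of triples to $gx$-converging ones, so $f(gx)=gf(x)$.

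The main obstacle is the central estimate relating Gromov products to common separating chains. In particular, one must control the additive error uniformly, and handle the fact that a separating chain counts only pairs of coordinates of each triple. I would first prove the one-sided inequalities separately, using $L$-separation to splice chains for the lower bound and discreteness for the upper bound.
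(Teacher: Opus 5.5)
Your outline follows the same route as the paper. The paper gives no argument of its own here: it derives compatibility from \cite[Lem.~6.6]{Bowditch1998} and then cites \cite[Lem.~4.6, Prop.~4.7, Prop.~4.8]{Bowditch1998}, so deferring your central estimate is on a par with the paper.

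The gap is in injectivity and in the first case of surjectivity, where you attribute to discreteness a bound it does not give. Discreteness only says $(ab|xy)_{\mathcal{A}}<\infty$ for the fixed limit points. Your chains instead separate two points of $p$ from \emph{moving} points $x_i\to x$, $y_i\to y$ (any two coordinates of $s_i$ contain one tending to $x$, and likewise for $t_i$ and $y$), and the chains change with $i$. So the claim that such a chain ``eventually separates a neighbourhood of $\{x,y\}$'' is unjustified. Annuli with $x_i$ and $x$ on opposite sides exist in abundance, already in the system of Theorem~\ref{thm:good_annulus_system}, and no convergence argument removes them from an $i$-dependent chain. What you need is the uniform bound
\[
\limsup_{i\to\infty}(ab|x_iy_i)_{\mathcal{A}}\le (ab|xy)_{\mathcal{A}}+L \qquad (a\neq b,\ x\neq y,\ x_i\to x,\ y_i\to y).
\]
This bound genuinely requires $L$-separation. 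For example, on $S^1$, add to the system of Theorem~\ref{thm:good_annulus_system}, for each $i$, a chain of $i$ annuli whose $+$-sides are pairs of nested closed arcs of length tending to $0$ around $x_i$ and $y_i$, missing $x,y$. The result is still discrete and still has $(uv|w)_{\mathcal{A}}=\infty$, yet $(ab|x_iy_i)_{\mathcal{A}}\ge i$.

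To prove the bound, orient a chain $A_1<\dots<A_m$ separating $\{a,b\}$ from $\{x_i,y_i\}$ so that $\{a,b\}\subset A_1^-$ and $\{x_i,y_i\}\subset A_m^+$.
\begin{itemize}
\item Since the $A_j^-$ increase and $M\setminus A_{j+1}^-\subset A_j^+$, the indices with $x\in A_j^-$ form a terminal segment $\{k_x,\dots,m\}$, and $x\in A_j^+$ for $j\le k_x-2$. The same holds for $y$.
\item Hence $A_1,\dots,A_{\min\{k_x,k_y\}-2}$ separate $\{a,b\}$ from $\{x,y\}$, so we may assume $k_x\le (ab|xy)_{\mathcal{A}}+2$.
\item Fix, independently of $i$, a chain of $L+2$ annuli separating $x$ from $\{a,y\}$. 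It exists by hypothesis; adjust the choice of points if $\{a,b\}\cap\{x,y\}\neq\emptyset$.
\item Discarding its two end annuli leaves $L$ annuli separating an open neighbourhood $U$ of $x$ from an open neighbourhood $V$ of $\{a,y\}$.
\item For $i$ large, $x_i\in U$ and $y_i\in V$. Then each $A_j$ with $j\ge k_x$ crosses each of these $L$ annuli, since $x,a,x_i,y_i$ occupy the four quadrants.
\item $L$-separation gives $m-k_x+1<L$, which is the bound.
\end{itemize}
With this bound, the upper half of your estimate yields injectivity. Applied to the pairs of coordinates of $t_i$, it also shows directly that $\rho_{\mathcal{A}}(t_i,p)$ stays bounded when $t_i$ converges in $\theta_3(M)$.

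The remaining steps are fine. For the Gromov-sequence and continuity steps, the cleanest justification is the hypothesis $(ab|x)_{\mathcal{A}}=\infty$ plus discarding the last annulus of a chain, as in the paper's proof of Lemma~\ref{lem:proper_action}.
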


The proof of the following lemma is straightforward; we include it in Section \ref{sec:Proof} for completeness.

\begin{lemma}\label{lem:proper_action} If $G$ admits a convergence action on a perfect compactum $M$ preserving a separated annulus system $\mathcal{A}$ with $(xy|z)_{\mathcal{A}}=\infty$ for all distinct $x,y,z \in M$, then the induced isometric action on $X_\mathcal{A}$ is proper.
    
\end{lemma}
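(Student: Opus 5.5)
Since $G$ preserves $\mathcal{A}$, Theorem \ref{thm:scattered_1} makes $G$ act on $X_\mathcal{A}$ by isometries. So it suffices to show the following: for a fixed basepoint $p=(p_1,p_2,p_3)\in\theta_3(M)$ and any $R>0$, the set $\{g\in G : \rho_\mathcal{A}(p,gp)\le R\}$ is finite. Properness in the metric sense follows from this, since the balls around $p$ exhaust $X_\mathcal{A}$ and the action is isometric.

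I will argue by contradiction using the convergence property. Suppose infinitely many distinct $g_n$ satisfy $\rho_\mathcal{A}(p,g_np)\le R$. The action on $\theta_3(M)$ is properly discontinuous, so we may pass to a subsequence with collapsing dynamics. That is, there are points $a,b\in M$ (possibly equal) such that $g_n\to a$ uniformly on compact subsets of $M\setminus\{b\}$. At most one of $p_1,p_2,p_3$ equals $b$, so at least two of them, say $p_1,p_2$, lie off $b$. Hence $g_np_1\to a$ and $g_np_2\to a$. I keep $p_1,p_2$ fixed and use that $p_3$ gives a third distinct point.

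Next I will show that $\rho_\mathcal{A}(p,g_np)\to\infty$, which contradicts the bound $R$. Choose a point $z\neq a$ among $p_1,p_2,p_3$, or else use two auxiliary points: since $M$ is perfect, pick distinct $y,z\in M\setminus\{a\}$ that are distinct from the $p_i$ where needed. The hypothesis $(xy|z)_\mathcal{A}=\infty$ gives compatibility via \cite[Lem. 6.6]{Bowditch1998}, and it also gives arbitrarily long chains separating a small neighbourhood of $a$ from two points of $p$ that are distinct from $a$. Concretely, at most one $p_i$ equals $a$, so two of them, say $p_j,p_k$, differ from $a$. Fix $N$. By compatibility, or directly from $(x y|z)=\infty$ combined with a finiteness/discreteness argument, there is a chain $c$ of length $N$ separating $\{p_j,p_k\}$ from some neighbourhood $U$ of $a$. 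Each annulus in $c$ has its side containing $a$ a closed set with nonempty interior around $a$. For large $n$ we have $g_np_1,g_np_2\in U$, so $c$ separates the triple $p$ (two points on one side) from $g_np$ (two points on the other side). Thus $\rho_\mathcal{A}(p,g_np)\ge N+1$, a contradiction once $N>R$.

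The main obstacle is producing a length-$N$ chain that separates a whole neighbourhood of $a$ from $\{p_j,p_k\}$, rather than just the point $a$. I would first try to use compatibility exactly as Bowditch states it: if no such chain of length $N$ existed for any neighbourhood, we could pick $x_i\to a$ with $(ax_i|p_jp_k)_\mathcal{A}<N$ bounded, contradicting compatibility. Note that compatibility is stated as $(xx_i|yz)\to\infty$ for $x_i\to x$, which is precisely what is needed here. Given this, the cleanest implementation is to apply compatibility directly to the sequences $x_n=g_np_1\to a$ and $x'_n=g_np_2\to a$. However, that gives two separate chains, one for $\{a,g_np_1\}$ and one for $\{a,g_np_2\}$. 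To get a single chain separating both from $\{p_j,p_k\}$, I would take the longer chain and truncate it using nestedness: annuli in a chain are nested, so the inner segment of one chain can be combined with the other. If that truncation fails, I would instead apply compatibility to the set version, using $(\{a,x_n,x'_n\}|\{p_j,p_k\})$ via the pair $(x_n x'_n|p_jp_k)$. This also tends to infinity by the same argument in \cite[Lem. 6.6]{Bowditch1998} with points converging to $a$.
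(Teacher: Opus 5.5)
Your outline is essentially the paper's: collapse a subsequence so that two points of $g_np$ converge to a common limit $a$, then separate two points $p_j,p_k\neq a$ of $p$ from a neighbourhood of $a$ by a long chain. However, the step you yourself flag as the main obstacle is never established, and none of the routes you suggest for it works as written.

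\begin{itemize}
\item \emph{The contrapositive in your first attempt is not a valid inference.} If no length-$N$ chain separates any neighbourhood of $a$ from $\{p_j,p_k\}$, it does not follow that some $x_i\to a$ has $(ax_i|p_jp_k)_\mathcal{A}<N$. The chain witnessing $(ax|p_jp_k)_\mathcal{A}\geq N$ may depend on $x$. For example, on $M=[0,1]$ the pairwise non-nested annuli $([1/4,1],\{0\}\cup[2^{-k-1},2^{-k}])$, $k\geq 3$, separate $\{0,x\}$ from $\{1/2,1\}$ for every $x\in[0,1/8]$, yet none separates a neighbourhood of $0$.
\item \emph{The ``truncation'' step has no mechanism.} You want to merge a chain for $\{a,g_np_1\}$ with one for $\{a,g_np_2\}$, but nothing puts $g_np_2$ on the $a$-side of the annuli of the first chain. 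Also, annuli taken from two different chains need not be nested.
\item \emph{The pair version is deferred, not proved.} The statement $(g_np_1\,g_np_2|p_jp_k)_\mathcal{A}\to\infty$ is true, but attributing it to ``the same argument as Bowditch's Lemma 6.6'' leaves exactly the content of the lemma unproved.
\item \emph{One assertion is false.} It is not true that each annulus of $c$ has its $a$-side containing a neighbourhood of $a$; for a single annulus, $a$ may lie in the frontier of that side.
\end{itemize}

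The missing observation is that nesting supplies the neighbourhood at the cost of one annulus, with no appeal to compatibility. By hypothesis $(p_jp_k|a)_\mathcal{A}=\infty$. So for each $N$ there is a chain $A_1<\cdots<A_m$ with $m\geq N+1$ and, say, $\{p_j,p_k\}\subset A_i^-$ and $a\in A_i^+$ for all $i$. Set $U:=M\setminus A_m^-$. It is open, and it contains $a$ because $A_m^+\cap A_m^-=\emptyset$. By transitivity of nesting, $A_i<A_m$ for each $i<m$, and this says precisely that $U\subset A_i^+$. In the opposite orientation, take $U:=M\setminus A_1^+$ and discard $A_1$. For large $n$ we have $g_np_1,g_np_2\in U$, so the chain $A_1,\dots,A_{m-1}$ separates the triples $p$ and $g_np$. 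Hence $\rho_\mathcal{A}(p,g_np)\geq m\geq N+1$. This is exactly the paper's argument; inserting it in place of your last two paragraphs completes your proof.
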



In \cite[Prop.3.7]{Lang13}, Lang shows that a geodesic hyperbolic space $X$ admits an $\text{Isom}(X)$-equivariant isometric embedding $i$ into an \emph{injective} metric space $E(X)$ with a coarsely dense image (i.e., there is $C \geq 0$ such that each $y \in E(X)$ is within $C$ of $i(X)$). In \cite[Prop. A.2]{PSZ24}, it is observed that the above remains true when $X$ is a roughly geodesic hyperbolic space. Since injective spaces are geodesic, combining  \cite[Prop. A.2]{PSZ24} with Theorem \ref{thm:scattered_1}, Theorem \ref{thm:scattered_2}, and Lemma \ref{lem:proper_action} yields the following. 


\begin{theorem}\label{thm:summary} If $G$ admits a convergence action on a perfect compactum $M$ preserving a separated  annulus system $\mathcal{A}$ with $(xy|z)_\mathcal{A}=\infty$ for all distinct $x,y,z \in M,$ then $G$ acts properly by isometries on a geodesic hyperbolic space $X$ inducing a $G$-equivariant homeomorphism $f:M \rightarrow  \partial X$.
    
\end{theorem}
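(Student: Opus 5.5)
The plan is to assemble Theorem \ref{thm:summary} from the results already quoted, passing through the roughly geodesic space $X_\mathcal{A}$ and then its injective hull.

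\emph{Step 1: the intermediate space.} By Theorem \ref{thm:scattered_1}, $X_\mathcal{A}=(\theta_3(M),\rho_\mathcal{A})$ is a roughly geodesic hyperbolic space on which $G$ acts by isometries, since $G$ preserves $\mathcal{A}$. Lemma \ref{lem:proper_action} shows this action is proper. Theorem \ref{thm:scattered_2}, which uses the hypothesis $(xy|z)_\mathcal{A}=\infty$, gives a $G$-equivariant homeomorphism $f_0:M\to\partial X_\mathcal{A}$.

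\emph{Step 2: passing to a geodesic space.} Let $X:=E(X_\mathcal{A})$ be the injective hull given by \cite[Prop. A.2]{PSZ24}. It comes with an $\operatorname{Isom}(X_\mathcal{A})$-equivariant isometric embedding $i:X_\mathcal{A}\to X$ whose image is $C$-dense. Injective metric spaces are geodesic. Since $i$ is an isometric embedding with coarsely dense image, it is a quasi-isometry, so $X$ is hyperbolic.

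By equivariance, each $g\in G$ extends to an isometry $\bar g$ of $X$ with $\bar g\circ i=i\circ g$. The assignment $g\mapsto \bar g$ is a homomorphism, because the hull construction is functorial and isometries of $E$ extending a given map are unique; this is part of Lang's result. This gives an isometric action of $G$ on $X$.

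\emph{Step 3: properness and the boundary.}
\begin{itemize}
\item \textbf{Properness.} Take a ball $B(p,R)\subset X$ and choose $q\in X_\mathcal{A}$ with $d(i(q),p)\le C$. If $\bar g B(p,R)\cap B(p,R)\neq\emptyset$, then $d(i(gq),i(q))\le 2R+2C$, so $\rho_\mathcal{A}(gq,q)\le 2R+2C$. By properness on $X_\mathcal{A}$, only finitely many $g$ satisfy this.
\item \textbf{Boundary.} The quasi-isometry $i$ is $G$-equivariant, so it induces a $G$-equivariant homeomorphism $\partial i:\partial X_\mathcal{A}\to\partial X$. This holds even for roughly geodesic spaces, using the sequential (Gromov product) boundary, which is quasi-isometry invariant for hyperbolic spaces in this setting.
\item \textbf{Conclusion.} Set $f=\partial i\circ f_0$.
\end{itemize}

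\emph{Main obstacle.} I expect the delicate point to be Step 2: checking that the extension of isometries to $E(X_\mathcal{A})$ is genuinely a group action, and that boundary identification behaves well for merely roughly geodesic spaces. I would handle both by appeal to \cite[Prop. A.2]{PSZ24} together with standard quasi-isometry invariance of the Gromov boundary. Everything else is formal.
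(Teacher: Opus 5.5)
Your proposal is correct and follows the paper's route: combine Theorem \ref{thm:scattered_1}, Theorem \ref{thm:scattered_2} and Lemma \ref{lem:proper_action} for the roughly geodesic space $X_\mathcal{A}$, then pass to the geodesic injective hull via \cite[Prop. A.2]{PSZ24}. Your checks that the action, properness and the boundary identification transfer along the coarsely dense equivariant isometric embedding spell out details the paper leaves implicit.
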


By Theorem \ref{thm:summary}, proving Theorem \ref{thm:main} reduces to constructing a $G$-invariant separated annulus system $\mathcal{A}$ with $(xy|z)_\mathcal{A}=\infty$ for all distinct $x,y,z\in M$. In fact, Theorem \ref{thm:good_annulus_system} constructs an annulus system that satisfies the stronger condition $(K|x)_\mathcal{A}=\infty$ for every compact $K\subset M$ and $x\notin K$. Our construction is inspired by Bowditch's \cite[Prop.~8.2]{Bowditch1998}, where, assuming only a convergence action on a perfect compactum $M$, he constructs a separated $G$-invariant annulus system $\mathcal{A}$ satisfying $(x|y)_\mathcal{A}>0$ for all distinct $x,y\in M$.  Bowditch then uses the additional assumption that every point of $M$ is conical to establish $(K|x)_\mathcal{A}=\infty$ for every compact $K\subset M$ and $x\notin K$. The key observation of this paper (Theorem \ref{thm:good_annulus_system}) is that one can construct a separated annulus system $\mathcal{A}$ with $(K|x)_\mathcal{A}=\infty$ (for all compact $K \subset M$ and $x \notin K$) without the extra assumption that every point of $M$ is conical.

\ubsh{Acknowledgements}\emergencystretch=2em We thank Thomas Barthelmé, Sami Douba, KyeongRo Kim, Kathryn Mann, Alessandro Sisto and Wenyuan Yang for fruitful discussions. We are particularly grateful to Sami Douba for bringing Misha Kapovich's question to our attention following a talk by the second author at the Max Planck Institute in Leipzig in July 2026.

\uesh


\section{Constructing the desired annulus system}\label{sec:Proof} 

We start by providing a proof to Lemma \ref{lem:proper_action}, which we restate here for completeness.

\begin{lemma} If $G$ admits a convergence action on a perfect compactum $M$ preserving a separated annulus system $\mathcal{A}$ with $(xy|z)_{\mathcal{A}}=\infty$ for all distinct $x,y,z \in M$, then the induced isometric action on $X_\mathcal{A}$ is proper.
    
\end{lemma}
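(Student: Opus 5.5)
We argue by contradiction. Properness here means: for every point $p\in\theta_3(M)$ and every $R\geq 0$, the set $\{g\in G:\rho_\mathcal{A}(p,gp)\leq R\}$ is finite. Suppose instead that there is a fixed $p=(x_1,x_2,x_3)\in\theta_3(M)$, a constant $R$, and infinitely many distinct $g_n\in G$ with $\rho_\mathcal{A}(p,g_np)\leq R$. We will extract a collapsing subsequence using the convergence property and then produce a long separating chain, contradicting this bound.

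First we use the convergence action. Since the action on $\theta_3(M)$ is properly discontinuous and the $g_n$ are distinct, the standard dynamical reformulation applies. After passing to a subsequence, there are points $a,b\in M$ (possibly equal) with $g_n|_{M\setminus\{b\}}\to a$ locally uniformly. At least two of $x_1,x_2,x_3$, say $x_1,x_2$, differ from $b$. Hence $g_nx_1\to a$ and $g_nx_2\to a$, uniformly on a compact neighbourhood of $\{x_1,x_2\}$ that avoids $b$.

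Next we use the hypothesis $(xy|z)_\mathcal{A}=\infty$ to build a long chain. Choose a point $z\in\{x_1,x_2,x_3\}$ distinct from $a$. This is possible unless $a$ equals one of the $x_i$; that case is handled by perfectness, since we may first replace $p$ by a nearby triple. Pick also some $y\neq a,z$. Choose a chain $c\subset\mathcal{A}$ of length $N>R+3L$ separating $\{a,y\}$ from $z$.

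The key observation concerns nested chains $A_1<\dots<A_N$ separating $\{a,y\}$ from $z$. Say $a\in A_i^-$ and $z\in A_i^+$ for all $i$; the reverse orientation is handled symmetrically. Then $A_1^-$ is contained in the interior of $A_2^-$, since $M\setminus A_2^+\supseteq$ and $M\setminus A_1^+\subset A_2^-$; similarly $A_{N}^+$ sits inside the complement of $A_{N-1}^-$. So, after discarding the two end annuli, the inner annuli separate a neighbourhood $U$ of $a$ from a neighbourhood $V$ of $z$.

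For large $n$ we then have $g_nx_1,g_nx_2\in U$. The plan is to arrange $x_1,x_2,x_3\in V$, so that $c$ separates $p$ from $g_np$ and gives $\rho_\mathcal{A}(p,g_np)\geq N-1>R$, a contradiction. We expect this to be the main obstacle: we need two of the $x_i$ in $V$, not just $z$.

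To get around it, we will instead use the hypothesis with the pair on the $p$ side: take a chain separating $\{x_1,x_2\}$ from $a$, which is possible when $a\notin\{x_1,x_2\}$. Its middle annuli separate neighbourhoods of $\{x_1,x_2\}$ and of $a$, so it separates $p$ (two points) from $g_np$ (two points near $a$). When $a$ coincides with some $x_i$, we will vary $p$ slightly: the action is isometric, and it suffices to check properness at one point. So we choose $p$ with all coordinates distinct from the finitely many relevant points $a$ — but $a$ depends on the subsequence. To handle this, we will choose $p$ after fixing the infinite set of $g_n$, and use the fact that $\rho_\mathcal{A}(q,g_nq)\leq R+2\rho_\mathcal{A}(p,q)$ for any fixed $q$. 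Since $M$ is perfect, a $q\in\theta_3(M)$ avoiding $a$ exists, and then the argument above gives the contradiction.
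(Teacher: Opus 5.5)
Your final argument is correct and is essentially the paper's: collapse two coordinates of $g_np$ to a common limit $a$, use the hypothesis to get a long chain separating two coordinates of $p$ from $a$, and drop the last annulus so that an open neighbourhood of $a$ lies on the far side of the remaining annuli. The detour through a perturbed triple $q$ and the triangle inequality is unnecessary. As the paper does, you can take the two coordinates of $p$ on the near side to be any two that avoid $a$, independently of which two coordinates of $g_np$ converge to $a$.
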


\begin{proof} Since $M$ is compact, for any $(x,y,z) \in \theta_3(M)$ and any infinite sequence of distinct elements $\{g_i\} \subset G$, after passing to a subsequence, we have $g_ix \rightarrow a, \,y_i=g_iy \rightarrow b$ and $g_iz \rightarrow c.$ Since the action on $\theta_3(M)$ is properly discontinuous, the points $a,b,c$ can't all be distinct, say $b=c.$ It is possible that $b \in \{x,y,z\}$, so let $d_1,d_2 \in \{x,y,z\}$ be distinct points with $d_1 \neq b$ and $d_2 \neq b.$ By assumption, we have $(d_1d_2|b)_\mathcal{A}=\infty$, that is, for each integer $n \geq 1$, there is $m \geq n$ and a chain $$A_{n_1}=(A_{n_1}^-,A_{n_1}^+)<A_{n_2}=(A_{n_2}^-, A_{n_2}^+)< \cdots <A_{n_m}=(A_{n_m}^-, A_{n_m}^+),$$ with $\{d_1,d_2\} \subset A_{n_1}^-$ and $b \in A_{n_m}^+$. Since $M \setminus A_{n_m}^-$ is an open set containing $b$, we must have $g_iy, g_iz \in M \setminus A_{n_m}^-$ for all large enough $i.$ In particular, $A_{n_1}, \cdots A_{n_{m-1}}$ all separate $\{d_1,d_2\}$ from $\{y_i,z_i\}$ where $y_i=g_iy$ and $z_i=g_iz$. This shows that $(d_1d_2|y_iz_i)_\mathcal{A} \geq m-1$ for all large enough $i$. Since the above holds for any $n \geq 1$ and since $m \geq n,$ we have $(d_1d_2|y_iz_i)_\mathcal{A} \rightarrow \infty.$ This shows that $\rho_\mathcal{A}((x,y,z), g_i((x,y,z)) \rightarrow \infty$ completing the proof.
\end{proof}

Our proof of Theorem \ref{thm:main} makes substantial use of the following proposition.

\begin{proposition}[{\cite[Prop. 7.1]{Bowditch1998}}]\label{prop:convergece_characterize}The action of a group $G$ on a perfect compactum is a convergence action if and only if for any infinite sequence of distinct elements $\{g_n\} \subset G$, there is a subsequence $g_{n_k}$ and points $a,b \in M$ such that $g_{n_k}|_{M \setminus \{b\}} \rightarrow a$ uniformly on compact sets.
\end{proposition}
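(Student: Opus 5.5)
We prove the two implications of Proposition \ref{prop:convergece_characterize} separately. In both directions the basic tool is compactness of $M$: any sequence in $M$, and hence in $M^3$, has a convergent subsequence. We also use the standard diagonal trick for sequences of homeomorphisms of a compact metrizable space.

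\emph{($\Leftarrow$)} Assume the collapsing property and suppose, for a contradiction, that the action on $\theta_3(M)$ is not properly discontinuous. Then there is a compact set $K\subset\theta_3(M)$ and infinitely many distinct $g_n$ with $g_nK\cap K\neq\emptyset$. Choose triples $(x_n,y_n,z_n)\in K$ with $g_n(x_n,y_n,z_n)\in K$, and pass to a subsequence on which $g_n|_{M\setminus\{b\}}\to a$ uniformly on compacta. By compactness of $K$ we may further assume that $(x_n,y_n,z_n)\to(x,y,z)\in K$ and $g_n(x_n,y_n,z_n)\to(x',y',z')\in K$.

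At most one of $x,y,z$ equals $b$, say $x,y\neq b$. Take a closed neighbourhood $C$ of $\{x,y\}$ avoiding $b$. For large $n$ we have $x_n,y_n\in C$, so uniform convergence on $C$ gives $g_nx_n\to a$ and $g_ny_n\to a$. Hence $x'=y'=a$, which contradicts $(x',y',z')\in\theta_3(M)$.

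\emph{($\Rightarrow$)} This is the substantive direction. Given distinct $g_n$, use compactness of the space of closed subsets of $M\times M$ (Hausdorff topology) to pass to a subsequence along which the graphs of $g_n$ converge to a closed set $\Gamma$. Because $g_n^{-1}$ are homeomorphisms, $\Gamma$ has full projection onto both factors. The goal is to show that $\Gamma\subset (M\times\{a\})\cup(\{b\}\times M)$ for some points $a,b$; this containment is equivalent to uniform convergence $g_n\to a$ on compact subsets of $M\setminus\{b\}$.

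To get the containment, first suppose two distinct points $p,q$ both have at least two $\Gamma$-images, or a single point $p$ has three distinct $\Gamma$-images. In either case one extracts sequences realizing distinct triples $(u_n,v_n,w_n)\to(u,v,w)\in\theta_3(M)$ whose images $g_n(u_n,v_n,w_n)$ converge to a distinct triple. Then the compact set $K$ formed by small closed neighbourhoods of the two limit triples satisfies $g_nK\cap K\ne\emptyset$ infinitely often, contradicting proper discontinuity. Perfectness of $M$ is what lets us perturb coincident points into genuinely distinct triples.

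The main obstacle is organizing this case analysis cleanly. It needs two ingredients. One is that $\Gamma$ is the graph of a function except above at most one point $b$; symmetrically, using $g_n^{-1}$, $\Gamma^{-1}$ is a function except above at most one point $a$. The other is that this function is constant. For constancy, suppose it takes two values at points $p,q\neq b$. Then $\Gamma$ is a homeomorphism-like limit on an open set, and since $M$ is perfect we can pick three distinct nearby points whose images stay distinct, again contradicting proper discontinuity. Once both ingredients are in place, $\Gamma\subset(M\times\{a\})\cup(\{b\}\times M)$ follows, and with it the claimed uniform convergence.
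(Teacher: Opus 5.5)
The paper does not prove this proposition; it cites it from Bowditch, so your argument has to stand on its own. Your ($\Leftarrow$) direction is correct. The ($\Rightarrow$) direction, however, rests on a false claim. A single point $p$ with three distinct $\Gamma$-images does \emph{not} give a contradiction. The sequences realizing those images all converge to $p$, so the source triples converge to $(p,p,p)\notin\theta_3(M)$, and no compact subset of $\theta_3(M)$ contains them.

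In fact this configuration is forced. If $\Gamma\subset(M\times\{a\})\cup(\{b\}\times M)$ and $\Gamma$ projects onto the second factor, then every $y\neq a$ is a $\Gamma$-image of $b$, so $b$ has infinitely many images. Your case analysis therefore contradicts your own target conclusion: taken together, they would force $G$ to be finite. The constancy step is also unsupported. Nothing makes the limit function $f$ on $M\setminus\{b\}$ ``homeomorphism-like'' on an open set; a priori it could be locally constant with two values. Perfectness alone does not produce three points with distinct images.

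The one configuration that genuinely contradicts proper discontinuity is an \emph{independent triple}: three points $(p_i,q_i)\in\Gamma$ with $p_1,p_2,p_3$ pairwise distinct and $q_1,q_2,q_3$ pairwise distinct. To get the contradiction, take $K$ to be the union of the products of small disjoint closed neighbourhoods of the $p_i$ and of the $q_i$. Everything else is combinatorics, using surjectivity of both projections and $|M|=\infty$.

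\begin{itemize}
\item Suppose $p\neq q$ each have two images. Choose $s$ outside those four values and a preimage $r$ of $s$. If $r\notin\{p,q\}$, you get an independent triple directly. If $r=p$, then $p$ has three images, and any auxiliary $r'\notin\{p,q\}$ with one of its images completes an independent triple. A third source point is needed here, which your sketch also omits. This yields the single exceptional point $b$ and a function $f$ on $M\setminus\{b\}$.
\item For constancy, suppose $f(p)\neq f(q)$. Pick $s\notin\{f(p),f(q)\}$ and any preimage $r$ of $s$. Then $r\notin\{p,q\}$ (possibly $r=b$), and $(p,f(p)),(q,f(q)),(r,s)$ is independent. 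This uses exactly the many images of $b$ that your second case rules out.
\end{itemize}
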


For the rest of this section, \textbf{every statement assumes that $G$ admits a convergence action on a perfect compactum $M$}. We will also fix once and for all a metric $d$ on $M$ inducing its topology. The following remark is crucial.
\begin{remark}\label{rem:key_remark} It immediately follows from Proposition \ref{prop:convergece_characterize} that if $G$ admits a convergence action on a perfect compactum $M$, then for any annulus $A=(A^-, A^+)$ and any infinite sequence of distinct elements $g_i$, after passing to a subsequence, we have $$\text{min}\{\text{diam} (g_iA^-), \text{diam}(g_iA^+)\}\rightarrow 0. $$ 
\end{remark}

The following is an immediate corollary of Remark \ref{rem:key_remark}
    
\begin{corollary}\label{cor:Bowditch's Key} For any finite collection of annuli $F=\{A_1, \cdots A_n\}$, the annulus system $\mathcal{F}:=G \cdot \{A_1, \cdots A_n\}$ is discrete.

\end{corollary}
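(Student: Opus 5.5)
We argue by contradiction, using Remark \ref{rem:key_remark} to control chains in $\mathcal{F}$. Fix $x\neq y$ and $z\neq w$ in $M$, and put
\[
\delta:=\min\{d(x,y),\,d(z,w)\}>0 .
\]
Suppose $(xy|zw)_{\mathcal{F}}=\infty$. Then there are chains $c_k\subset\mathcal{F}$ separating $\{x,y\}$ from $\{z,w\}$ with $|c_k|\to\infty$.

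\textbf{Reduction to one element of $F$ and many group elements.} By the pigeonhole principle, each $c_k$ contains at least $|c_k|/n$ annuli of the form $gA_j$ for a single $j$. Passing to a subsequence in $k$, we may take $j$ fixed. Thus for every $N$ there are $N$ distinct annuli $g_1A_j,\dots,g_NA_j$, each separating $\{x,y\}$ from $\{z,w\}$. Distinct annuli of the form $gA_j$ require distinct cosets of $\operatorname{Stab}(A_j)$.

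\textbf{The stabilizer is finite.} Suppose $\operatorname{Stab}(A_j)$ were infinite, and take an infinite sequence of distinct elements in it. By Remark \ref{rem:key_remark}, after passing to a subsequence one of $\operatorname{diam}(g_iA_j^\pm)=\operatorname{diam}(A_j^\pm)$ tends to $0$. This is impossible: if $A_j^+$ were a single point, a nontrivial sequence fixing it would still have to push $A_j^+$ to small diameter, which is consistent. So I would argue more carefully instead. Cosets are not needed: any infinite family of distinct annuli $gA_j$ comes from infinitely many distinct group elements $g$, and we can always choose distinct representatives.

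\textbf{Deriving the contradiction.} We therefore obtain an infinite sequence of distinct $g_i$ such that each $g_iA_j$ separates $\{x,y\}$ from $\{z,w\}$. Each $g_iA_j^\pm$ contains one of the pairs $\{x,y\}$ or $\{z,w\}$, so
\[
\operatorname{diam}(g_iA_j^\pm)\ge\delta \quad\text{for every } i .
\]
This contradicts Remark \ref{rem:key_remark}, which gives a subsequence along which $\min\{\operatorname{diam}(g_iA_j^-),\operatorname{diam}(g_iA_j^+)\}\to 0$.

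\textbf{Main obstacle.} The delicate point is producing infinitely many distinct group elements from arbitrarily long chains. A chain has distinct annuli by definition; a strict nesting $A<B$ with $A=B$ would force $M\setminus A^-\subset A^+$, hence $A^-\cup A^+=M$, contradicting the annulus axiom. So arbitrarily long chains contain arbitrarily many distinct translates $gA_j$, and these are given by distinct elements $g$. With that settled, the rest is the pigeonhole step plus Remark \ref{rem:key_remark}.
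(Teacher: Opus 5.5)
Your argument is correct and is essentially the paper's proof: you pigeonhole to get infinitely many distinct translates $g_iA_j$ of a single $A_j\in F$ separating $\{x,y\}$ from $\{z,w\}$. Then Remark~\ref{rem:key_remark} contradicts $\min\{\operatorname{diam}(g_iA_j^-),\operatorname{diam}(g_iA_j^+)\}\ge\min\{d(x,y),d(z,w)\}>0$. You should delete the abandoned ``stabilizer is finite'' paragraph: the stabilizer can in fact be infinite (an element with north--south dynamics fixing $a,b$ stabilizes $(\{b\},\{a\})$), and your final argument only uses that infinitely many distinct translates require infinitely many distinct group elements.
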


\begin{proof} This can be extracted from \cite{Bowditch1998} but we provide the proof for completeness. Let $x,y,z,w \in M$ with $x \neq y$ and $z \neq w$. If there are infinitely many annuli in $\mathcal{F}$ separating $\{x,y\}$ from $\{z,w\}$, then, after passing to a subsequence, there is $A \in \{A_1, \cdots A_n\}$ and infinitely many $g_i \in G$ such that $g_iA$ separates $\{x,y\}$ from $\{z,w\}$. After passing to another subsequence, we get $\text{min}\{\text{diam}(g_iA^-), \text{diam}(g_iA^+)\} \rightarrow 0$ which is a contradiction to the fact that $x \neq y$ and $z \neq w$.
    
\end{proof}

The following lemma is unnecessary for our main theorem and can be completely skipped. However, it is a particularly powerful consequence of \cite{Bowditch1998} and highlights the power of \cite[Prop. 7.1]{Bowditch1998}. 

\begin{lemma} For any finite collection of annuli $F=\{A_1, \cdots A_n\}$, there is $L=L(F,G,M)$ such that the annulus system $\mathcal{F}=G \cdot F$ is $L$-separated. In particular, each finite collection of annuli $F$ gives rise to a hyperbolic metric $X_{\mathcal{F}}$ and an isometric action of $G$ on $X_{\mathcal{F}}$.
    
\end{lemma}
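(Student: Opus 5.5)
We argue by contradiction. The "in particular" clause then follows from Corollary \ref{cor:Bowditch's Key}, which gives discreteness, and Theorem \ref{thm:scattered_1}.

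Suppose that for every $L$ there are crossing chains $c_1^L, c_2^L \subset \mathcal{F}$ with $\min\{|c_1^L|,|c_2^L|\} \geq L$. Since $F$ is finite, every annulus of $\mathcal{F}$ has the form $gA_j$. By pigeonhole, and after shortening the chains to subchains of length $\geq L/n$, we may assume all annuli of $c_1^L$ are translates of a single $A=A_j$ and all annuli of $c_2^L$ are translates of a single $B=A_k$. Translating the configuration by the inverse of one element of $c_2^L$, we may also assume the ``middle'' annulus of $c_2^L$ is exactly $B$. This is legitimate because $\mathcal{F}$ is $G$-invariant and crossing and nesting are preserved by homeomorphisms.

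The key step uses that all annuli of $c_1^L$ cross $B$. Each $gA \in c_1^L$ meets both $B^-$ and $B^+$ on each of its sides. Fix points $p^\pm \in B^\pm$ witnessing this for $B$ itself. These four crossing intersections force $\operatorname{diam}(gA^-)$ and $\operatorname{diam}(gA^+)$ to be bounded below by $\delta = d(B^-,B^+)>0$. To see this, note that $gA^+$ meets both $B^-$ and $B^+$, so its diameter is at least $\delta$, and likewise for $gA^-$.

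Now collect the elements $g$ appearing in all the chains $c_1^L$ as $L\to\infty$. If infinitely many distinct $g$ occur, then Remark \ref{rem:key_remark} gives a subsequence with $\min\{\operatorname{diam}(g_iA^-),\operatorname{diam}(g_iA^+)\}\to 0$, contradicting the bound $\geq\delta$. So only finitely many translates $gA$ cross $B$.

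On the other hand, a chain consists of distinct annuli. Strict nesting $A'<A''$ forces $A'\neq A''$, because $M\setminus A^{-}\subset A^{+}$ fails for a genuine annulus $A$: that inclusion would imply $A^-\cup A^+=M$. Hence $|c_1^L|\le$ the number of translates of $A$ crossing $B$, which is a bounded number. This contradicts $|c_1^L|\to\infty$. Taking the maximum of these bounds over all pairs $(A_j,A_k)$ gives the explicit $L$.

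I expect the main obstacle to be that we normalized using $c_2$ but then only used that $c_1$ crosses the single annulus $B$. This makes the role of $c_2$'s length seem to disappear, so the argument must be checked carefully. In fact it seems we only need one crossing annulus, which would show that the number of translates of $A$ crossing a fixed $B$ is finite, and hence that chains crossing any annulus are uniformly short. If that reasoning has a gap, I would first check the lower diameter bound when $B$'s sides are not well separated. Since $B^\pm$ are disjoint compact sets, $\delta>0$ holds, so the bound should be sound.
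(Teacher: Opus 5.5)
Your argument is correct and is essentially the paper's proof. Both rest on the same observation: any translate $gA$ crossing a fixed annulus $B$ has $\operatorname{diam}(gA^\pm)\geq d(B^-,B^+)>0$, so by Remark \ref{rem:key_remark} only finitely many translates of each $A_j$ cross each $A_k$, and equivariance then bounds the length of any chain crossing even a single annulus of $\mathcal{F}$. Your closing worry is unfounded: the paper likewise uses only one annulus of the second chain. You can therefore drop the contradiction set-up and the pigeonhole step, and simply take $L=1+\max_k\#\{B\in\mathcal{F}: B \text{ crosses } A_k\}$.
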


\begin{proof} By Corollary \ref{cor:Bowditch's Key}, the annulus system $\mathcal{F}$ is discrete. By equivariance, if each $A_i$ is crossed by at most $L_i$ members of $\mathcal{F}$, then we are done by taking $L=1+\text{max}_{1 \leq i \leq n}\{L_i\}$. Otherwise, there is some $A_i \in F$, say $A_1,$ which is crossed by infinite many $B_i \in \mathcal{F}$. Since $F$ is finite, after passing to a subsequence, we may assume $B_i=g_iA$ for some $A \in \mathcal{F}$. Since $\text{min}\{ \text{diam} (g_iA^-), \text{diam} (g_iA^+)\} \rightarrow 0$ and $A_1^+,A_1^-$ are closed, we get $A_1^+ \cap A_1^- \neq \emptyset$ which is a contradiction. 
    
\end{proof}

Given an annulus $A=(A^-,A^+)$, we define the \emph{width} of $A$ by $W(A)=d(A^-,A^+)$ and define the \emph{depth} of $A$ by $D(A):= \text{min}\{\text{diam}(A^-), \text{diam}(A^+)\}.$

\begin{lemma}\label{lemma:finiteness} For any $\epsilon>0$ we have $|\{g \in G: D(gA)\geq \epsilon\}|<\infty$. Further, if $|A^-| \geq2$ and $|A^+| \geq 2$, then $|\{g \in G: W(gA)\geq\epsilon\}|< \infty.$

\end{lemma}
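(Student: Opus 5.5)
Both parts will be proved by contradiction, passing to a subsequence of distinct group elements and invoking Proposition \ref{prop:convergece_characterize} together with Remark \ref{rem:key_remark}. Throughout, $A=(A^-,A^+)$ is a fixed annulus.

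\emph{Depth.} Suppose infinitely many distinct $g_i$ satisfy $D(g_iA)\geq\epsilon$. By Remark \ref{rem:key_remark}, after passing to a subsequence we get $D(g_iA)=\min\{\operatorname{diam}(g_iA^-),\operatorname{diam}(g_iA^+)\}\to 0$. This contradicts $D(g_iA)\geq \epsilon$ for all $i$. So the first claim is immediate from the remark.

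\emph{Width.} Suppose $|A^\pm|\geq 2$ and there are infinitely many distinct $g_i$ with $W(g_iA)\geq\epsilon$.
\begin{enumerate}
\item By Proposition \ref{prop:convergece_characterize}, pass to a subsequence and find points $a,b\in M$ with $g_i|_{M\setminus\{b\}}\to a$ uniformly on compact subsets.
\item Since $A^-$ and $A^+$ are disjoint, $b$ lies outside at least one of them, say $b\notin A^-$ (the other case is symmetric).
\item Then $A^-$ is a compact subset of $M\setminus\{b\}$, so $g_iA^-\to a$ uniformly; in particular $\operatorname{diam}(g_iA^-)\to 0$ and $g_iA^-$ converges to $\{a\}$.
\item Because $|A^+|\geq 2$, the set $A^+$ contains a point $p\neq b$. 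Hence $g_ip\to a$, and so
$$W(g_iA)=d(g_iA^-,g_iA^+)\leq d(g_iq,g_ip)\to 0$$
for any $q\in A^-$.
\end{enumerate}
This contradicts $W(g_iA)\geq\epsilon$.

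The main subtlety is step 4, and it is exactly where the hypothesis is used. One must make sure the repelling point $b$ cannot absorb all of $A^+$, which would let $g_iA^+$ stay far from $a$. The assumption $|A^+|\geq 2$ rules this out by supplying a second point $p\neq b$. Symmetrically, $|A^-|\geq 2$ is needed in the case $b\notin A^+$. Uniform convergence is only used on the compact set $A^-$ (respectively $A^+$) and at single points, so there is no difficulty with $b$ possibly lying in the closure of anything.
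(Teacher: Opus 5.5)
Your proposal is correct and follows essentially the same argument as the paper: the depth bound comes straight from Remark~\ref{rem:key_remark}, and for the width you use Proposition~\ref{prop:convergece_characterize} to get the pair $(a,b)$, then pick points of $A^-$ and $A^+$ different from $b$, whose images both converge to $a$. Your case split on which side of $A$ avoids $b$ is just a more explicit version of the paper's choice of $a^\pm \in A^\pm\setminus\{b\}$; note that the uniform convergence on all of $A^-$ in your step 3 is not actually needed, since pointwise convergence at the two chosen points suffices.
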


\begin{proof} The first part is immediate by Remark \ref{rem:key_remark}. For the second part, if $g_i$ is an infinite sequence, then there is a subsequence $g_{i_k}$ and $a,b \in M$ such that $g_{i_k}|_{M \setminus \{b\}} \rightarrow a.$ Since $|A^-| \geq2$ and $|A^+| \geq 2$, we can choose $a^+ \in A^+,a^- \in A^-$ with $b \notin \{a^+,a^-\}$, and hence $d(g_{i_k}a^-, g_{i_k}a^+)) \rightarrow 0$. This completes the proof.
    
\end{proof}

Given $x \in M$ and $0<\epsilon_1< \epsilon_2<\text{diam}(M)$, we define the annulus $$A(x, \epsilon_1, \epsilon_2):=(\overline{B}(x,\epsilon_1), M \setminus B(x, \epsilon_2)),$$ where $B(x, \epsilon)$ is the ball of radius $\eps$ about $x.$ Observe that $W(A(x, \epsilon_1, \epsilon_2)) \geq \epsilon_2-\epsilon_1.$ The following lemma provides us with the annuli needed to achieve Theorem \ref{thm:main}.

\begin{lemma}\label{lemma:good_annulus} For each $x \in M$ and positive integer $n  > \frac{1}{\text{diam}(M)}$, there is $\epsilon=\epsilon(x,n)$ and an annulus $A(x,\epsilon,\frac{1}{n})$ such that: 

\begin{enumerate}

\item $W(A(x,\epsilon, \frac{1}{n}))> \frac{1}{n+1}$, and
\item $D(gA(x,\epsilon, \frac{1}{n})) < \frac{1}{n+2}$ for all $g \in G$.

\end{enumerate}

\end{lemma}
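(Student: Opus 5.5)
Fix $x$ and $n$. The plan is to first choose an auxiliary annulus of large width and controlled shape, and then shrink the inner radius $\epsilon$ until the second condition holds for every $g$, using Lemma \ref{lemma:finiteness} to reduce the second condition to finitely many group elements.

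\textbf{Step 1 (width).} Set $r=\frac{1}{n}$, which is less than $\text{diam}(M)$ by the hypothesis $n>\frac{1}{\text{diam}(M)}$. Any $\epsilon<\frac{1}{n}-\frac{1}{n+1}$ makes $A(x,\epsilon,\frac1n)$ an annulus with $W\geq \frac1n-\epsilon>\frac{1}{n+1}$, which is condition (1). We must also check that $M\setminus (A^-\cup A^+)$ is nonempty and that $A^+\neq\emptyset$. Since $M$ is perfect and connectedness is not assumed, nonemptiness of the middle region is not automatic. If it fails for a given $\epsilon$, I would perturb the radii within the allowed range, using that $M$ is perfect and so has points at various distances. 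This is a technical point I would treat in passing.

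\textbf{Step 2 (depth, reduction to finitely many $g$).} Fix a reference annulus $A_0=A(x,\epsilon_0,\frac1n)$ with $\epsilon_0<\frac1n-\frac1{n+1}$. For $\epsilon\le\epsilon_0$ we have $A(x,\epsilon,\frac1n)^-\subset A_0^-$ and the outer sets agree, so
$$D(gA(x,\epsilon,\tfrac1n))\le \max\{\ldots\}$$
is not directly monotone. However, $\text{diam}(gA^+)$ is independent of $\epsilon$, and $\text{diam}(gA^-)\le \text{diam}(gA_0^-)$. Hence $D(gA(x,\epsilon,\frac1n))\le D(gA_0)$ whenever the minimum in $D(gA_0)$ is attained on either side. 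In both cases $D(gA_\epsilon)\le \min\{\text{diam}(gA_0^-),\text{diam}(gA_0^+)\}=D(gA_0)$, because shrinking $A^-$ only shrinks the first term. By Lemma \ref{lemma:finiteness}, the set $F=\{g: D(gA_0)\ge \frac1{n+2}\}$ is finite. For every $g\notin F$, condition (2) therefore holds for all $\epsilon\le\epsilon_0$.

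\textbf{Step 3 (the finitely many remaining $g$).} For each $g\in F$, the map $g$ is a homeomorphism of a compact metric space and hence uniformly continuous. So there is $\delta_g>0$ with $\text{diam}(g\overline{B}(x,\delta_g))<\frac1{n+2}$. Take $\epsilon=\min(\epsilon_0,\min_{g\in F}\delta_g)>0$. Then for $g\in F$ we get $D(gA)\le\text{diam}(gA^-)<\frac1{n+2}$. Combined with Step 2, this gives condition (2) for all $g\in G$.

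The main obstacle is really Step 2: making sure the finiteness lemma is applied to a \emph{fixed} annulus, so that the exceptional set $F$ does not depend on $\epsilon$. Once that is arranged, the monotonicity observation settles it. The secondary issue is verifying that the annulus axioms hold for the chosen radii.
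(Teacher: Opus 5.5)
Your proposal is correct and follows essentially the same route as the paper: fix a reference annulus with inner radius below $\frac{1}{n(n+1)}$, use Lemma \ref{lemma:finiteness} to isolate the finitely many $g$ with $D(gA_0)\geq \frac{1}{n+2}$, and shrink the inner radius using continuity of those $g$ at $x$. For all other $g$, shrinking $A^-$ can only decrease $D(gA)$; the paper uses this monotonicity implicitly and you make it explicit. On your side issue, only $\epsilon$ is free: shrinking it below $d(x,y)$ for some $y\neq x$ in $B(x,\frac{1}{n})$ (which exists since $M$ is perfect) makes the middle region nonempty, but $A^+\neq\emptyset$ depends only on $x$ and $\frac1n$ and is not implied by $n>\frac{1}{\operatorname{diam}(M)}$ alone (it is implied by $n>\frac{2}{\operatorname{diam}(M)}$), a technicality the paper's statement also glosses over.
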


\begin{proof} Let $x \in M$ and $n >\frac{1}{\text{diam}(M)}$. Choose some $\delta< \frac{1}{n(n+1)}$ and consider the annulus $A=A(x,\delta, \frac{1}{n}).$  By Lemma \ref{lemma:finiteness}, there is only finitely many $g_1, \cdots g_m \in G$ such that $D(g_iA) \geq \frac{1}{n+2}.$ Since each $g_i$ is continuous, we may choose $0<\epsilon<\delta$ such that $g_i B(x,\epsilon) \subset B(g_ix, \frac{1}{3(n+2)})$ for all $i \in \{1 \cdots m\}.$ In particular, $\text{diam}(g_iB(x,\epsilon)) < \frac{2}{3(n+2)}$ which gives us that $\text{diam}(g_i\overline{B}(x,\epsilon)) \leq \frac{2}{3(n+2)}<\frac{1}{n+2}$. This shows that $A(x, \eps, \frac{1}{n})$ satisfies the second conclusion of the lemma. Since $\epsilon<\delta< \frac{1}{n(n+1)}$, the annulus $A(x, \eps, \frac{1}{n})$ satisfies the first conclusion as well which completes the proof.

\end{proof}

\begin{theorem}\label{thm:good_annulus_system} Let $G$ admit a convergence action on a perfect compactum $M$. There is a $G$-invariant annulus system $\mathcal{A}$ such that:

\begin{enumerate}
    \item $\mathcal{A}$ is 1-separated, and

    \item For any compact $K$ and $x \in M \setminus K$, we have $(x|K)_\mathcal{A}=\infty.$
\end{enumerate}
    
\end{theorem}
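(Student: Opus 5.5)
The plan is to take $\mathcal{A}$ to be the $G$-orbit of a countable family of annuli supplied by Lemma \ref{lemma:good_annulus}. For each $n>\frac{1}{\text{diam}(M)}$ and each $x\in M$ let $A_{x,n}:=A(x,\epsilon(x,n),\frac1n)$. Since the open balls $B(x,\epsilon(x,n))$ cover the compact space $M$, choose a finite set $F_n\subset M$ with $M=\bigcup_{p\in F_n}B(p,\epsilon(p,n))$. Then set
$$\mathcal{A}:=G\cdot\{A_{p,n}: n>\tfrac{1}{\text{diam}(M)},\ p\in F_n\},$$
which is $G$-invariant by construction. The three things to check are discreteness, the absence of crossings (this is $1$-separation), and condition (2).

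\emph{No crossings.} First I will show that if annuli $B,C$ cross, then $W(C)\le D(B)$. Let $S$ be a side of $B$ with $\text{diam}(S)=D(B)$. Since $B$ and $C$ cross, $S$ meets both $C^-$ and $C^+$, so $d(C^-,C^+)\le\text{diam}(S)=D(B)$. Now suppose $gA_{p,n}$ crosses $hA_{q,m}$ with $n\le m$. Translating by $g^{-1}$, the annulus $A_{p,n}$ crosses $g^{-1}hA_{q,m}$. Lemma \ref{lemma:good_annulus} then gives
$$\tfrac{1}{n+1}<W(A_{p,n})\le D(g^{-1}hA_{q,m})<\tfrac{1}{m+2}\le\tfrac1{n+2},$$
a contradiction. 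So no two members of $\mathcal{A}$ cross, which gives $1$-separation once discreteness is known.

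\emph{Discreteness.} Suppose $gA_{p,n}$ separates $\{x,y\}$ from $\{z,w\}$. Both pairs lie on single sides of this annulus, and one side has diameter $<\frac{1}{n+2}$. Hence $\min\{d(x,y),d(z,w)\}<\frac1{n+2}$, which bounds $n$. Only finitely many $F_n$ are therefore involved, so the relevant annuli lie in $G\cdot F$ for a finite $F$. Corollary \ref{cor:Bowditch's Key} then shows that only finitely many annuli separate the two pairs, so $(xy|zw)_\mathcal{A}<\infty$.

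\emph{Condition (2).} This is the step where care is needed; the essential trick is choosing the $F_n$ as covers by the inner balls rather than as nets. Let $r=d(x,K)>0$. I will build inductively $n_1<n_2<\cdots$ and $p_j\in F_{n_j}$ with $x\in B(p_j,\epsilon_j)$, where $\epsilon_j=\epsilon(p_j,n_j)$.
\begin{itemize}
\item Start with $\frac{2}{n_1}<r$ and any $p_1\in F_{n_1}$ whose inner ball contains $x$. Then $K\subset M\setminus B(p_1,\frac1{n_1})$, because $d(x,p_1)<\epsilon_1<\frac1{n_1}$.
\item Given $n_j$ and $p_j$, choose $n_{j+1}$ with $\frac{2}{n_{j+1}}<\epsilon_j-d(x,p_j)$, and $p_{j+1}\in F_{n_{j+1}}$ with $x\in B(p_{j+1},\epsilon_{j+1})$.
\end{itemize}
The second choice gives $B(p_{j+1},\frac1{n_{j+1}})\subset B(x,\frac2{n_{j+1}})\subset B(p_j,\epsilon_j)$. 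Taking complements, $M\setminus B(p_j,\epsilon_j)\subset M\setminus B(p_{j+1},\frac1{n_{j+1}})$. This is exactly the nesting $A_{p_{j+1},n_{j+1}}<A_{p_j,n_j}$, with the inner ball of the outer annulus containing the whole inner annulus. Every annulus in the sequence has $x$ in its inner ball and $K$ in its outer side, so it separates $x$ from $K$. The first $k$ of them form a chain of length $k$ separating $x$ from $K$ for every $k$, and therefore $(x|K)_\mathcal{A}=\infty$.
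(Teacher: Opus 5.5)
Your proposal is correct and follows essentially the same route as the paper's proof. It uses the same $G$-orbit of finite inner-ball subcovers from Lemma~\ref{lemma:good_annulus}, the same depth bound that reduces discreteness to finitely many orbits, the same width-versus-depth contradiction to rule out crossings, and the same inductive nested chain for condition (2); your inequality $W(C)\le D(B)$ for crossing annuli and the explicit choice $\frac{2}{n_{j+1}}<\epsilon_j-d(x,p_j)$ are just cleaner packagings of the paper's four-point argument and its ``$n_2$ large enough'' step.
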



\begin{proof}\emergencystretch=2em  Given a positive integer $n >\frac{1}{\text{diam}(M)}$ and $x \in M$, consider $A(x, \epsilon(x,n),\frac{1}{n})=(\overline{B}(x,\epsilon(x,n)), M \setminus B(x, \frac{1}{n}))$ which is the annulus produced by Lemma \ref{lemma:good_annulus}. For a given $n,$ the collection $\{B(x, \epsilon(x, n))| x \in M\}$ covers $M$ and hence we get a finite subcover $\{B(x_i, \epsilon_i)\}_{i=1}^{m_n}$. Consider their respective collection of annuli given by $F_n=\{A(x_i, \epsilon_i, \frac{1}{n})\}_{i=1}^{m_n}$. We claim that the annulus system $$\mathcal{A}:=\bigcup_{n \in \mathbb{N}} G \cdot F_n$$
satisfies the conclusion of the theorem. 

To show $\mathcal{A}$ is $1$-separated, we need to first show $\mathcal{A}$ is discrete and so let $x,y,z,w \in M$ be with $x \neq y$ and $z \neq w$. We claim that there is $k$ such that every annulus separating $\{x,y\}$ from $\{z,w\}$ is in $\bigcup_{1 \leq i \leq k} G\cdot F_i$ in which case we are done by Corollary \ref{cor:Bowditch's Key}. To see this, observe that if $A$ separates $\{x,y\}, \{z,w\}$, then $D(A) \geq d(x,y)$ or $D(A) \geq d(z,w).$ In particular, $D(A) \geq \text{min}\{d(x,y), d(z,w)\}$.  By item 2 of Lemma \ref{lemma:good_annulus}, we have $D(A) \leq \frac{1}{n+2}$ for any $A \in G \cdot F_n$ and so $\frac{1}{n+2} \geq D(A) \geq \text{min}\{d(x,y), d(z,w)\}$. This bounds $n$ from above, that is, there is $k$ such that every annulus $A$ separating $\{x,y\}$ from $\{z,w\}$ is in $G \cdot F_i$ where $i \leq k$.

Now we show that no two annuli $A,B \in \mathcal{A}$ cross which implies that $\mathcal{A}$ is 1-separated. Assume for the sake of contradiction that some $A,B \in \mathcal{A}$ cross. In particular, there exist distinct $x,y,z,w \in M$ such that $A$ separates $\{x,y\}$ from $\{z,w\}$ and $B$ separates $\{x,z\}$ from $\{y,w\}.$ By definition of $\mathcal{A}$, we must have  $A \in G \cdot F_m$ and $B \in G \cdot F_n$. Without loss of generality, assume $m \leq n$. By equivariance, we may also assume that $A \in F_m$. In particular, by Lemma \ref{lemma:good_annulus}, we have $W(A) > \frac{1}{m+1} $ which gives us that $d(x,z)> \frac{1}{m+1}$ and $d(w,z)> \frac{1}{m+1}.$ In particular, we must have $D(B)  \geq \frac{1}{m+1}.$ But since $B \in G \cdot F_n$ with $n \geq m$, Lemma \ref{lemma:good_annulus} tells us $D(B) < \frac{1}{n+2} \leq \frac{1}{m+2}$ which is a contradiction. This shows that $\mathcal{A}$ is 1-separated. For the second part, let $K$ be compact and $x \in M \setminus K.$ Choose $n_1$ large enough so that for any $y \in M$ with $x \in B(y,\frac{1}{n_1})$, the ball $B(y,\frac{1}{n_1})$ is disjoint from $K.$ The finite cover obtained at the $n_1$-stage yields a ball $B(y_1,\epsilon_1) \subset B(y_1,\frac{1}{n_1})$ containing $x$ and disjoint from $K.$ Hence, the annulus $A_1:=A(y_1, \epsilon_1, \frac{1}{n_1})$ separates $x$ from $K$, consequently, $(x|K)_{\mathcal{A}} \geq 1.$ Now, take $n_2$ large enough so that any ball $B(y, \frac{1}{n_2})$ containing $x$ is properly contained in $B(y_1,\epsilon_1)$. In particular, the finite cover obtained at the $n_2$-stage yields a ball $B(y_2, \epsilon_2) \subset B(y_2, \frac{1}{n_2})$ containing $x$ and disjoint from $M \setminus B(y_1,\epsilon_1)$. Hence, we have $A_1>A_2$ and each $A_i$ separates $x$ from $K$ which shows $(x|K)_{\mathcal{A}} \geq 2.$ Continuing inductively shows $(x|K)_\mathcal{A}=\infty.$

\end{proof}

\ubsh{AI Declaration} Artificial intelligence tools were used solely to locate relevant literature. They were not used to develop, verify, or write any of the mathematical arguments presented in this article.

\uesh

\bibliographystyle{alpha}
\bibliography{references}

\end{document}